
\documentclass[12pt]{article}
\usepackage{latexsym}
\usepackage{amsmath, amsthm, amsfonts}
\usepackage{graphics}
\usepackage{graphicx}
\usepackage{color}
\usepackage[table,xcdraw]{xcolor}
\usepackage{float}
\usepackage{array}
\usepackage{hyperref}


\lineskiplimit=0pt
\partopsep0pt
\tolerance=5000

\newcommand{\abs}[1]{\left\lvert#1\right\rvert}
\newtheorem{theorem}{Theorem}[section]

\newtheorem{openproblem}[theorem]{Open Problem}

\def\smallskip{\addvspace{\smallskipamount}}
\def\medskip{\addvspace{\medskipamount}}
\def\bigskip{\addvspace{\bigskipamount}}
\textwidth = 6.2in
\oddsidemargin=0.15in
\evensidemargin=0.15in
\topmargin=-0.2in
\textheight=8.5in
\def\makefootline{\baselineskip=24pt \line{\the\footline}}
\def\pagecontents{\ifvoid\topins\else\unvbox\topins\fi
   \dimen@=\dp255 \unvbox255
   \ifvoid\footins\else
      \vskip\skip\footins \footnoterule \unvbox\footins\fi
     \ifr@ggedbottom \kern-\dimen@ \vfil \fi}
\def\footnoterule{\kern-3pt\hrule width 2truein \kern 2.6pt}

\begin{document}

\title{{Complex Dynamics of $\displaystyle{z_{n+1}=\frac{\alpha + \beta z_{n}+ \gamma z_{n-1}}{A + B z_n + C z_{n-1}}}$}}

\author{Sk. Sarif Hassan\\
  \small {International Centre for Theoretical Sciences}\\
  \small {Tata Institute of Fundamental Research}\\
  \small {Bangalore $560012$, India}\\
  \small Email: {\texttt{sarif.hassan@icts.res.in}}\\
}

\maketitle
\begin{abstract}
\noindent The dynamics of the second order rational difference equation $\displaystyle{
z_{n+1}=\frac{\alpha + \beta z_{n}+ \gamma z_{n-1}}{A + B z_n + C z_{n-1}}}$ with complex parameters and arbitrary complex initial conditions is investigated. In the complex set up, the local asymptotic stability and boundedness are studied vividly for this difference equation. Several interesting characteristics of the solutions of this equation, using computations, which does not arise when we consider the same equation with positive real parameters and initial conditions are shown. The chaotic solutions of the difference equation is absolutely new feature in the complex set up which is also shown in this article.  Some of the interesting observations led us to pose some open interesting problems regarding chaotic and higher order periodic solutions and global asymptotic convergence of this equation.
\end{abstract}

\vfill
\begin{flushleft}\footnotesize
{Keywords: Rational difference equation, Local asymptotic stability, Chaotic trajectory and Periodicity. \\
{\bf Mathematics Subject Classification: 39A10, 39A11}}
\end{flushleft}

\section{Introduction and Preliminaries}
A rational difference equation is a nonlinear difference equation of the form
$$z_{n+1} = \frac{\alpha+\sum_{i=0}^k \beta_iz_{n-i}}{A+\sum_{i=0}^k B_iz_{n-i}}$$
where the initial conditions $z_{0}, z_{-1}, \dots, z_{-k}$ are such that the denominator never vanishes for any $n$.\\
\noindent
Consider the equation

\begin{equation}
z_{n+1}=\frac{\alpha + \beta z_{n}+ \gamma z_{n-1}}{A + B z_n + C z_{n-1}},  n=0,1,\ldots
\label{equation:total-equationA}
\end{equation}%
where all the parameters and the initial conditions $z_{-1}$ and $z_{0}$ are arbitrary complex number.

\addvspace{\bigskipamount}
\noindent
This second order rational difference equation Eq.(\ref{equation:total-equationA}) is studied when the parameters are real numbers and initial conditions are non-negative real numbers in \cite{S-R-E}. In this present article it is an attempt to understand the same in the complex plane.\\

\noindent
Here, a very brief review of some well known results which will be useful in order to apprehend the behavior of solutions of the difference equation (\ref{equation:total-equationA}).

\bigskip
\noindent
Let $f:\mathbb{D}^{2} \rightarrow \mathbb{D}$ where $\mathbb{D}\subseteq \mathbf{C}$ be a continuously differentiable function. Then for any pair of initial conditions $z_0, z_{-1} \in \mathbb{D}$, the difference equation
\begin{equation}
\label{equation:introduction}
z_{n+1} = f(z_{n}, z_{n-1}) \hspace{.25in}
 \hspace{.25in}
\end{equation}
with initial conditions $z_{-1}, z_{0} \in \mathbb{D}.$\\ \\
\noindent
Then for any \emph{initial value}, the difference equation (1) will have a unique solution $\{z_n\}_n$.\\ \\
\noindent
A point $\overline{z}$ $\in$ $\mathbb{D}$ is called {\it \textbf{equilibrium point}} of Eq.(\ref{equation:introduction}) if
$$f(\overline{z}, \overline{z}) = \overline{z}.$$
\noindent
The \emph{linearized equation} of Eq.(\ref{equation:introduction}) about the equilibrium $\bar{z}$ is the linear difference equation

\begin{equation}
\label{equation:linearized-equation}
\displaystyle{
z_{n+1} = a_{0} z_{n} + a_{1}z_{n-1} \hspace{.25in} , \hspace{.25in} n=0,1,\ldots
}
\end{equation}
where for $i = 0$ and $1$.
$$a_{i} = \frac{\partial f}{\partial u_{i}}(\overline{z}, \overline{z}).$$
The \emph{characteristic equation} of Eq.(2) is the equation

\begin{equation}
\label{equation:characteristic-roots}
\lambda^{2} - a_{0}\lambda -a_{1} = 0.
\end{equation}

\bigskip
\noindent
The following are the briefings of the linearized stability criterions which are useful in determining the local stability character of the equilibrium
$\overline{z}$ of Eq.(\ref{equation:introduction}), \cite{S-H}.

\bigskip
\noindent
Let $\overline{z}$ be an equilibrium of the difference equation $z_{n+1}=f(z_n,z_{n-1})$.
\begin{itemize}
\item The equilibrium $\bar{z}$ of Eq. (2) is called \textbf{locally stable} if for every $\epsilon >0$, there exists a $\delta>0$ such that for every $z_0$ and $z_{-1}$ $\in$ $\mathbb{C}$ with $|z_0-\bar{z}|+|z_{-1}-\bar{z}|<\delta$ we have $|z_n-\bar{z}|<\epsilon$ for all $n>-1$.
\item The equilibrium $\bar{z}$ of Eq. (2) is called \textbf{locally stable} if it is locally stable and if there exist a $\gamma>0$ such that for every $z_0$ and $z_{-1}$ $\in$ $\mathbb{C}$ with $|z_0-\bar{z}|+|z_{-1}-\bar{z}|<\gamma$ we have $\lim_{n \to\infty}z_n=\bar{z}$.
\item The equilibrium $\bar{z}$ of Eq. (2) is called \textbf{global attractor} if for every  $z_0$ and $z_{-1}$ $\in$ $\mathbb{C}$, we have $\lim_{n \to\infty}z_n=\bar{z}$.
\item The equilibrium of equation Eq. (2) is called \textbf{globally asymptotically stable/fit} is stable and is a global attractor.
\item The equilibrium $\bar{z}$ of Eq. (2) is called \textbf{unstable} if it is not stable.
\item The equilibrium $\bar{z}$ of Eq. (2) is called \textbf{source or repeller} if there exists $r>0$ such that for every $z_0$ and $z_{-1}$ $\in$ $\mathbb{C}$ with $|z_0-\bar{z}|+|z_{-1}-\bar{z}|<r$ we have $|z_n-\bar{z}|\geq r$. Clearly a source is an unstable equilibrium.
\end{itemize}

\noindent
\textbf{Result 1.1: (Clark's Theorem)} The sufficient condition for the asymptotic stability of the difference equation (1) is $$\abs{a_0}+\abs{a_1}<1$$

\section{Difference Equation and Its Transformed Forms}
The following difference equation is considered to be studied here.

\begin{equation}
z_{n+1}=\frac{\alpha + \beta z_{n}+ \gamma z_{n-1}}{A + B z_n + C z_{n-1}},  n=0,1,\ldots
\end{equation}%
where all the parameters are complex number and the initial conditions $z_{-1}$ and $z_{0}$ are arbitrary complex numbers. \\

\noindent
We will consider three different cases of the Eq.(\ref{equation:introduction}) which are as follows:\\

\subsection{The case $\beta=\gamma=0: z_{n+1}=\frac{\alpha}{A+B z_n+ C z_{n-1}}$}

By the change of variables, $z_n=\frac{\alpha}{A} w_n$, the difference equation $z_{n+1}=\frac{\alpha}{A+B z_n+ C z_{n-1}}$ reduced to the difference equation

\begin{equation}
w_{n+1}=\frac{1}{1 + p w_n + q w_{n-1}},  n=0,1,\ldots
\end{equation}%
where $p=\frac{\alpha B}{A^2}$ and $q=\frac{\alpha C}{A^2}$.

\subsection{The case $\alpha=\gamma=0: z_{n+1}=\frac{\beta z_n}{A+B z_n+ C z_{n-1}}$}

By the change of variables, $z_n=\frac{A}{C} w_n$, the difference equation $z_{n+1}=\frac{\beta z_n}{A+B z_n+ C z_{n-1}}$ reduced to the difference equation

\begin{equation}
w_{n+1}=\frac{w_n}{1 + p w_n + q w_{n-1}},  n=0,1,\ldots
\end{equation}%
where $p=\frac{\beta}{A}$ and $q=\frac{B}{C}$.

\subsection{The case $\alpha=\beta=0: z_{n+1}=\frac{\gamma z_{n-1}}{A+B z_n+ C z_{n-1}}$}

By the change of variables, $z_n=\frac{\gamma}{C} w_n$, the difference equation $z_{n+1}=\frac{\gamma z_{n-1}}{A+B z_n+ C z_{n-1}}$ reduced to the difference equation

\begin{equation}
w_{n+1}=\frac{w_{n-1}}{p + q w_n + w_{n-1}},  n=0,1,\ldots
\end{equation}%
where $p=\frac{A}{\gamma}$ and $q=\frac{B}{C}$.\\

\noindent
Without any loss of generality, we shall now onward focus only on the three difference equations (6), (7) and (8).

\section{Local Asymptotic Stability of the Equilibriums}
In this section we establish the local stability character of the equilibria of Eq.(\ref{equation:total-equationA}) in three difference cases as stated in the section 2.

\subsection{Local Asymptotic Stability of $w_{n+1}=\frac{1}{1 + p w_n + q w_{n-1}}$}
The equilibrium points of Eq.(6) are the solutions of the quadratic equation
\[
\bar{w}=\frac{1}{1+p\bar{w}+q\bar{w}}
\]
Eq.(6) has the two equilibria points
$ \bar{w}_{1,2} =\frac{-1-\sqrt{1+4 p+4 q}}{2 (p+q)}$ and $\frac{-1+\sqrt{1+4 p+4 q}}{2 (p+q)}$ respectively.
\noindent
The linearized equation of the rational difference equation(6) with respect to the equilibrium point $ \bar{w}_{1} = \frac{-1-\sqrt{1+4 p+4 q}}{2 (p+q)}$ is

\begin{equation}
\label{equation:linearized-equation}
\displaystyle{
z_{n+1} + \frac{4 p}{\left(-1+\sqrt{1+4 p+4 q}\right)^2} w_{n} + \frac{4 q}{\left(-1+\sqrt{1+4 p+4 q}\right)^2} w_{n-1}=0,  n=0,1,\ldots
}
\end{equation}

\noindent
with associated characteristic equation
\begin{equation}
\lambda^{2} + \frac{4 p}{\left(-1+\sqrt{1+4 p+4 q}\right)^2} \lambda + \frac{4 q}{\left(-1+\sqrt{1+4 p+4 q}\right)^2} = 0.
\end{equation}

\addvspace{\bigskipamount}

\noindent
The following result gives the local asymptotic stability of the equilibrium $\bar{w}_{1}$ of the Eq. (6).

\begin{theorem}
The equilibriums $\bar{w}_{1}=\frac{-1-\sqrt{1+4 p+4 q}}{2 (p+q)}$ of Eq.(6) is \\ \\ locally asymptotically stable if $$\abs{\frac{4 p}{\left(-1+\sqrt{1+4 p+4 q}\right)^2}}+\abs{{\frac{4 q}{\left(-1+\sqrt{1+4 p+4 q}\right)^2}}}<1$$
\end{theorem}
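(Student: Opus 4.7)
The plan is to recognize that this theorem is essentially a direct instantiation of Clark's Theorem (Result 1.1), so the main work is to verify that the coefficients displayed in the hypothesis are indeed $|a_0|$ and $|a_1|$ from the linearization at $\bar{w}_1$. There is no deep analytic obstacle; the only real step is an algebraic simplification of $\bar{w}_1^2$ into the form appearing in the statement.

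First I would set $f(u_0,u_1)=\dfrac{1}{1+pu_0+qu_1}$, so $z_{n+1}=f(z_n,z_{n-1})$. A straightforward differentiation gives
\[
\frac{\partial f}{\partial u_0}=-\frac{p}{(1+pu_0+qu_1)^2}, \qquad \frac{\partial f}{\partial u_1}=-\frac{q}{(1+pu_0+qu_1)^2}.
\]
Evaluating at $(\bar{w}_1,\bar{w}_1)$ and using the equilibrium relation $1+(p+q)\bar{w}_1 = 1/\bar{w}_1$, the two partials become $a_0=-p\bar{w}_1^{\,2}$ and $a_1=-q\bar{w}_1^{\,2}$. This immediately yields the linearized equation and characteristic equation in the form already written in the excerpt, provided I can show
\[
\bar{w}_1^{\,2}=\frac{4}{\left(-1+\sqrt{1+4p+4q}\right)^2}.
\]

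To get this identity, I would write $s=\sqrt{1+4p+4q}$ so $s^2-1=4(p+q)$, hence $(1+s)(s-1)=4(p+q)$ and therefore
\[
\bar{w}_1=\frac{-(1+s)}{2(p+q)}=\frac{-(1+s)(s-1)}{2(p+q)(s-1)}=\frac{-2}{s-1}=\frac{2}{1-s}.
\]
Squaring gives $\bar{w}_1^{\,2}=\dfrac{4}{(s-1)^2}$, which is exactly the factor appearing in the linearized coefficients and the characteristic equation (\ref{equation:characteristic-roots}) displayed just before the theorem.

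With the identification of $a_0$ and $a_1$ complete, I would finish by invoking Result~1.1 (Clark's Theorem): the sufficient condition $|a_0|+|a_1|<1$ reads
\[
\left|\frac{4p}{(-1+\sqrt{1+4p+4q})^2}\right|+\left|\frac{4q}{(-1+\sqrt{1+4p+4q})^2}\right|<1,
\]
which is precisely the hypothesis of the theorem, so $\bar{w}_1$ is locally asymptotically stable. The only step that requires any real care is the simplification $\bar{w}_1^{\,2}=4/(-1+\sqrt{1+4p+4q})^2$; everything else is bookkeeping and a direct appeal to the already-stated stability criterion.
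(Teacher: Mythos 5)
Your proposal is correct and follows essentially the same route as the paper: identify the linearized coefficients at $\bar{w}_1$ and apply Clark's Theorem (Result 1.1), whose hypothesis $\abs{a_0}+\abs{a_1}<1$ is exactly the stated condition. In fact you supply slightly more detail than the paper does, since your simplification $\bar{w}_1^{\,2}=4/\left(-1+\sqrt{1+4p+4q}\right)^2$ justifies the coefficients that the paper simply displays in its linearized and characteristic equations before invoking the criterion.
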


\addvspace{\bigskipamount}

\begin{proof}
The zeros of the characteristic equation (10) has two zeros which are $-\frac{2 q}{p+\sqrt{p^2-4 p q+2 q \left(-1-2 q+\sqrt{1+4 p+4 q}\right)}}$ and $\frac{2 q}{-p+\sqrt{p^2-4 p q+2 q \left(-1-2 q+\sqrt{1+4 p+4 q}\right)}}$. Therefore by \emph{Clark's theorem}, the equilibrium $\bar{w}_{1}=\frac{-1-\sqrt{1+4 p+4 q}}{2 (p+q)}$  is \emph{locally asymptotically stable} if the sum of the modulus of two coefficients is less than $1$. Therefore the condition of the polynomial (10) reduces to $\abs{\frac{4 p}{\left(-1+\sqrt{1+4 p+4 q}\right)^2}}+\abs{{\frac{4 q}{\left(-1+\sqrt{1+4 p+4 q}\right)^2}}}<1$.

\end{proof}

\noindent
The linearized equation of the rational difference equation (6) with respect to the equilibrium point $ \bar{w}_{2} = \frac{-1+\sqrt{1+4 p+4 q}}{2 (p+q)}$ is

\begin{equation}
\label{equation:linearized-equation}
\displaystyle{
w_{n+1} + \frac{4 p}{\left(1+\sqrt{1+4 p+4 q}\right)^2} w_{n} + \frac{4 q}{\left(1+\sqrt{1+4 p+4 q}\right)^2} w_{n-1}=0,  n=0,1,\ldots
}
\end{equation}

\noindent
with associated characteristic equation
\begin{equation}
\lambda^{2} + \frac{4 p}{\left(1+\sqrt{1+4 p+4 q}\right)^2} \lambda + -\frac{4 q}{\left(1+\sqrt{1+4 p+4 q}\right)^2} = 0.
\end{equation}

\addvspace{\bigskipamount}

\begin{theorem}
The equilibriums $\bar{w}_{2}=\frac{-1+\sqrt{1+4 p+4 q}}{2 (p+q)}$ of Eq.(6) is \\ \\ locally asymptotically stable if $$\abs{\frac{4 p}{\left(1+\sqrt{1+4 p+4 q}\right)^2}}+\abs{\frac{4 q}{\left(1+\sqrt{1+4 p+4 q}\right)^2}}<1$$
\end{theorem}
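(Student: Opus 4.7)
The plan is to mirror the proof of Theorem 3.1 verbatim, only substituting the equilibrium $\bar{w}_2$ for $\bar{w}_1$. First I would confirm that equation (11) is the correct linearization of $w_{n+1}=f(w_n,w_{n-1})$ with $f(u_0,u_1)=\frac{1}{1+p u_0+q u_1}$ at $\bar{w}_2$. A short calculation gives $a_0=\frac{\partial f}{\partial u_0}(\bar{w}_2,\bar{w}_2)=-\frac{p}{(1+(p+q)\bar{w}_2)^2}$ and similarly $a_1=-\frac{q}{(1+(p+q)\bar{w}_2)^2}$. Because $\bar{w}_2$ satisfies the fixed-point equation $\bar{w}_2=\frac{1}{1+(p+q)\bar{w}_2}$, one has $1+(p+q)\bar{w}_2=\frac{1+\sqrt{1+4p+4q}}{2}$, and squaring the denominator produces the factor $\frac{(1+\sqrt{1+4p+4q})^2}{4}$ displayed in (11) and (12).

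Next, with the characteristic equation (12) in hand, I would invoke Clark's Theorem (Result~1.1): if the coefficients of a quadratic $\lambda^2-a_0\lambda-a_1=0$ satisfy $|a_0|+|a_1|<1$, then both roots lie strictly inside the open unit disk, and hence the equilibrium is locally asymptotically stable. Reading off $a_0=-\frac{4p}{(1+\sqrt{1+4p+4q})^2}$ and $a_1=-\frac{4q}{(1+\sqrt{1+4p+4q})^2}$ from (12) delivers the stated inequality directly.

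I do not expect any substantial obstacle: the argument is a one-step invocation of Clark's Theorem applied to an already-computed characteristic polynomial, and it parallels the proof of Theorem 3.1 line for line. The only places to be mildly cautious are the branch choice of the square root, which must be used consistently between the formula for $\bar{w}_2$ and the simplification of $(1+(p+q)\bar{w}_2)^2$, and the implicit non-degeneracy assumptions $p+q\neq 0$ and $1+\sqrt{1+4p+4q}\neq 0$, both required for $\bar{w}_2$ and the linearization coefficients to be well defined. For added symmetry with Theorem 3.1 one could optionally exhibit the two roots of (12) in closed form, but that is cosmetic; the Clark bound is what delivers the claimed stability.
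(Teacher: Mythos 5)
Your proposal is correct and follows essentially the same route as the paper: the paper's own proof is a one-line invocation of Clark's theorem applied to the characteristic equation (12), yielding exactly the stated inequality. Your extra verification of the linearization coefficients via $1+(p+q)\bar{w}_2=\frac{1+\sqrt{1+4p+4q}}{2}$ and the non-degeneracy remarks only make explicit what the paper leaves implicit.
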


\begin{proof}
Proof the theorem follows from \emph{Clark's theorem} of local asymptotic stability of the equilibriums. The condition for the local asymptotic stability reduces to $\abs{\frac{4 p}{\left(1+\sqrt{1+4 p+4 q}\right)^2}}+\abs{\frac{4 q}{\left(1+\sqrt{1+4 p+4 q}\right)^2}}<1$.

\end{proof}

\noindent
Here is an example case for the local asymptotic stability of the equilibriums.\\

\noindent
For $p=\frac{1}{2}$ and $q=\frac{i}{2}$ the equilibriums are $-1.6838+1.13355 i$ and $0.683802 -0.133552 i$. For the equilibrium $1.6838 + 1.13355 i$, the coefficients of the characteristic polynomial (10) are $0.775125+1.90868 i$ and $-1.90868 - 0.775125 i$ with same modulus $2.06006$. Therefore the condition as stated in the \emph{Theorem 3.1} does not hold. Therefore the  equilibrium $1.6838 + 1.13355 i$ is \emph{unstable}.\\

\noindent
For the equilibrium $0.683802 -0.133552 i$, the coefficients of the characteristic polynomial (12) are $-0.224875+0.091323 i$ and $-0.091323 - 0.224875 i$ with same modulus $0.242711$. Therefore the condition as stated in the \emph{Theorem 3.2} is hold good. Therefore the  equilibrium $0.683802 -0.133552 i$ is \emph{locally asymptotically stable}.\\

\noindent
It is seen that in the case of real positive parameters and initials values, the positive equilibrium of the difference equation (6) is globally asymptotically stable \cite{S-R-E}. But the result is not holding well in the complex set up.

\subsection{Local Asymptotic Stability of $w_{n+1}=\frac{w_n}{1 + p w_n + q w_{n-1}}$}
The equilibrium points of Eq.(7) are the solutions of the quadratic equation
\[
\bar{w}=\frac{\bar{w}}{1+p\bar{w}+q\bar{w}}
\]
The Eq.(7) has only the zero equilibrium.
\noindent
The linearized equation of the rational difference equation(7) with respect to the zero equilibrium is

\begin{equation}
\label{equation:linearized-equation}
\displaystyle{
w_{n+1} =  w_{n},  n=0,1,\ldots
}
\end{equation}

\noindent
with associated characteristic equation
\begin{equation}
\lambda^{2} - \lambda = 0.
\end{equation}

\addvspace{\bigskipamount}

\noindent
The following result gives the local asymptotic stability of the zero equilibrium of the Eq. (7).

\begin{theorem}
The zero equilibriums of the Eq. (7) is non-hyperbolic.
\end{theorem}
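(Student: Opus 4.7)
The plan is to verify directly that the linearization at the zero equilibrium has an eigenvalue on the unit circle, which is the definition of a non-hyperbolic fixed point. Since the characteristic polynomial of the linearized equation has already been displayed as equation (14), namely $\lambda^{2}-\lambda=0$, essentially all of the computational work is already in place; what remains is an interpretive step rather than a computational one.

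First, I would briefly justify equation (13) by computing the partial derivatives of $f(u_0,u_1)=\dfrac{u_0}{1+pu_0+qu_1}$ at $(0,0)$. A direct calculation gives
\begin{equation*}
\frac{\partial f}{\partial u_0}(0,0)=1,\qquad \frac{\partial f}{\partial u_1}(0,0)=0,
\end{equation*}
so the linearized equation about $\bar{w}=0$ is $w_{n+1}=w_n$, confirming the characteristic equation $\lambda^2-\lambda=0$.

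Next, I would factor the characteristic polynomial as $\lambda(\lambda-1)=0$, obtaining the roots $\lambda_1=0$ and $\lambda_2=1$. Since $|\lambda_2|=1$, the linearization possesses an eigenvalue lying on the unit circle in $\mathbf{C}$. By the standard definition, an equilibrium whose linearization has at least one eigenvalue of modulus exactly $1$ is non-hyperbolic, which is precisely what is to be shown.

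I do not anticipate any real obstacle here: the result is essentially immediate from inspection of equation (14). The only small care needed is to state explicitly the definition of non-hyperbolicity being invoked (existence of an eigenvalue with $|\lambda|=1$), since the excerpt's preliminaries define stability, global attraction, and repellers but do not formally recall the hyperbolic/non-hyperbolic dichotomy. A single sentence recalling this definition, followed by the observation that $\lambda_2=1$ satisfies $|\lambda_2|=1$, completes the argument.
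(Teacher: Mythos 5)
Your proposal is correct and follows essentially the same route as the paper: read off the roots $0$ and $1$ of the characteristic equation $\lambda^2-\lambda=0$ and note that the root $1$ has modulus exactly $1$, so the zero equilibrium is non-hyperbolic by definition. The only difference is that you also verify the partial derivatives behind the linearization (13)--(14), which the paper simply asserts.
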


\addvspace{\bigskipamount}

\begin{proof}
The zeros of the characteristic equation (14) has two zeros which are $0$ and $1$. Therefore by definition, the zero equilibrium is non-hyperbolic as the modulus of one zero is $1$.
\end{proof}

\noindent
It is nice to note that in the case of real positive parameters and initials values, the zero equilibrium of the difference equation (7) is globally asymptotically stable for the parameter $p \geq 1$ \cite{S-R-E}. But in the case of complex, the zero equilibrium is non-hyperbolic as we have seen the previous theorem.

\subsection{Local Asymptotic Stability of $w_{n+1}=\frac{w_{n-1}}{p + q w_n + w_{n-1}}$}
The equilibrium points of Eq.(8) are the solutions of the quadratic equation
\[
\bar{w}=\frac{\bar{w}}{p+q\bar{w}+\bar{w}}
\]
The Eq.(8) has two equilibriums which are $0$ and $\frac{1-p}{1+q}$.
\noindent
The linearized equation of the rational difference equation(8) with respect to the zero equilibrium is

\begin{equation}
\label{equation:linearized-equation}
\displaystyle{
w_{n+1} =  \frac{1}{p}w_{n},  n=0,1,\ldots
}
\end{equation}

\noindent
with associated characteristic equation
\begin{equation}
\lambda^{2} - \frac{\lambda}{p} = 0.
\end{equation}

\addvspace{\bigskipamount}

\noindent
The following result gives the local asymptotic stability of the zero equilibrium of the Eq. (8).

\begin{theorem}
The zero equilibriums of the Eq. (8) is locally asymptotically stable if $\abs{p} \geq 1$ and repeller if $\abs{p}<1$.
\end{theorem}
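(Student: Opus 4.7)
The plan is to extract the eigenvalues of the characteristic equation (16) and then check the two halves of the theorem separately against the definitions given in Section~1. Factoring $\lambda^{2}-\lambda/p = \lambda(\lambda-1/p)$ exhibits the two roots $\lambda_{1}=0$ and $\lambda_{2}=1/p$, after which the problem is essentially book-keeping about where these roots sit relative to the unit circle.

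For the local asymptotic stability half, I would apply Clark's theorem directly. With coefficients $a_{0}=0$ and $a_{1}=1/p$, the sufficient condition $\abs{a_{0}}+\abs{a_{1}}<1$ collapses to $\abs{1/p}<1$, equivalently $\abs{p}>1$. Both eigenvalues then lie strictly inside the open unit disk, so the zero equilibrium is locally asymptotically stable. The boundary case $\abs{p}=1$ is non-hyperbolic ($\abs{\lambda_{2}}=1$), so the ``$\geq$'' in the statement should either be read as strict or handled by an auxiliary centre-manifold style argument that Clark's theorem does not cover.

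For the repeller half, when $\abs{p}<1$ we have $\abs{\lambda_{2}}=1/\abs{p}>1$. Writing the general solution of the linearization in the eigenbasis, the $\lambda_{2}$-component amplifies by a factor of $1/p$ per step. Choosing any $r$ smaller than the scale on which the quadratic remainder of $f(u,v)=v/(p+qu+v)$ is dominated by its linear part, a standard neighbourhood estimate then shows that any orbit starting in the open ball of radius $r$ about the origin is eventually pushed outside this ball, which is precisely the definition of repeller from the preliminaries.

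The main obstacle is the ``slow'' eigenvalue $\lambda_{1}=0$: strictly speaking, a repeller requires \emph{every} eigenvalue to exceed the unit circle in modulus, yet initial data aligned with the eigendirection of $\lambda_{1}$ are damped to zero by the linearization alone. A clean proof therefore either (i) shows that the quadratic terms of $f$ eject such data from this invariant direction, or (ii) revisits the partial derivatives of $f$ at the origin; the latter yields $a_{0}=0$ and $a_{1}=1/p$, so the genuine characteristic equation is $\lambda^{2}-1/p=0$, whose two roots both have modulus $1/\sqrt{\abs{p}}$. With this corrected linearization the dichotomy about $\abs{p}=1$ is perfectly symmetric and both halves of the theorem follow immediately from the definitions in Section~1, without the degeneracy worry above.
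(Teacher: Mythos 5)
Your proposal is correct, and in fact it is more careful than the paper's own argument. The paper simply reads off the roots $0$ and $1/p$ of its characteristic equation (16) and declares the origin locally asymptotically stable when $\abs{1/p}<1$ and a repeller otherwise. You follow the same eigenvalue route at first, but you correctly flag the two places where that argument is shaky, and your second observation is the essential one: since Eq.~(8) is $w_{n+1}=w_{n-1}/(p+qw_n+w_{n-1})$, the nonzero partial derivative at the origin is the one with respect to $w_{n-1}$, not $w_n$, so the linearization is $w_{n+1}=\tfrac{1}{p}w_{n-1}$ and the genuine characteristic equation is $\lambda^{2}-\tfrac{1}{p}=0$, with both roots of modulus $\abs{p}^{-1/2}$. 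The paper's equations (15)--(16) attach the coefficient to the wrong variable, and with the resulting spectrum $\{0,1/p\}$ the repeller half of the theorem does not actually follow (a zero eigenvalue gives a contracting direction, so the origin would be a saddle-type unstable point rather than a repeller). With your corrected linearization the dichotomy is clean: $\abs{p}>1$ puts both roots strictly inside the unit disk (local asymptotic stability, consistent with Clark's theorem since $\abs{a_0}+\abs{a_1}=\abs{1/p}$), and $\abs{p}<1$ puts both strictly outside (repeller). Your remaining caveat is also justified: at $\abs{p}=1$ both roots lie on the unit circle, the equilibrium is non-hyperbolic, and neither Clark's theorem nor the linearization decides stability, so the ``$\geq$'' in the statement should be read as strict. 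In short, your route repairs the proof rather than merely reproducing it.
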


\addvspace{\bigskipamount}

\begin{proof}
The zeros of the characteristic equation (15) has two zeros which are $0$ and $\frac{1}{p}$. Therefore by definition, the zero equilibrium is locally asymptotically stable if $\abs{\frac{1}{p}} <1$ and unstable (repeller) if $\abs{\frac{1}{p}} \geq  1$. Hence the required is followed.
\end{proof}

\bigskip

\bigskip

\noindent
The linearized equation of the rational difference equation(8) with respect to the equilibrium $\frac{1-p}{1+q}$ is

\begin{equation}
\label{equation:linearized-equation}
\displaystyle{
w_{n+1}=\frac{1+pq}{1+q} w_n + \frac{p-1}{1+q} w_{n-1}=0,  n=0,1,\ldots
}
\end{equation}

\noindent
with associated characteristic equation
\begin{equation}
\lambda^{2} - \frac{1+pq}{1+q} \lambda - \frac{p-1}{1+q} = 0.
\end{equation}

\addvspace{\bigskipamount}

\noindent
The following result gives the local asymptotic stability of the equilibrium $\frac{1-p}{1+q}$ of the Eq. (8).

\begin{theorem}
The zero equilibriums of the Eq. (8) is locally asymptotically stable if $$\abs{1+pq}+\abs{p-1}<\abs{1+q}$$
\end{theorem}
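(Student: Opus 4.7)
The plan is to apply Clark's theorem (Result 1.1) directly to the linearized equation about the equilibrium $\bar{w} = \frac{1-p}{1+q}$, exactly as in the proofs of Theorems 3.1 and 3.2. The characteristic polynomial associated with that linearization is
\[
\lambda^{2} - \frac{1+pq}{1+q}\,\lambda - \frac{p-1}{1+q} = 0,
\]
so in the notation of Result 1.1 we have $a_{0} = \frac{1+pq}{1+q}$ and $a_{1} = \frac{p-1}{1+q}$.

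First I would note that the hypothesis implicitly assumes $1+q \neq 0$, so that the coefficients of the linearized equation are well defined and the equilibrium $\bar{w}$ itself exists. Then I would simply invoke Clark's theorem: the equilibrium is locally asymptotically stable whenever $\lvert a_{0}\rvert + \lvert a_{1}\rvert < 1$, that is,
\[
\left\lvert\frac{1+pq}{1+q}\right\rvert + \left\lvert\frac{p-1}{1+q}\right\rvert < 1.
\]
Multiplying through by the positive real number $\lvert 1+q\rvert$ (which does not affect the inequality) and using $\lvert a/b\rvert = \lvert a\rvert/\lvert b\rvert$ for complex numbers, this is equivalent to
\[
\lvert 1+pq\rvert + \lvert p-1\rvert < \lvert 1+q\rvert,
\]
which is precisely the hypothesis of the theorem.

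There is essentially no obstacle: once the characteristic equation displayed in Eq.~(18) is accepted, the result is a one-line algebraic rearrangement of Clark's sufficient condition. The only subtlety worth flagging in the write-up is the nondegeneracy assumption $1+q \neq 0$, since otherwise the linearization (and indeed the equilibrium formula $\frac{1-p}{1+q}$) is undefined; this should be mentioned at the outset of the proof.
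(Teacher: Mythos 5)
Your proposal is correct and follows essentially the same route as the paper: apply Clark's theorem (Result 1.1) to the characteristic equation (18) of the linearization about $\bar{w}=\frac{1-p}{1+q}$, obtaining $\left\lvert\frac{1+pq}{1+q}\right\rvert+\left\lvert\frac{p-1}{1+q}\right\rvert<1$ and clearing the denominator to get $\lvert 1+pq\rvert+\lvert p-1\rvert<\lvert 1+q\rvert$. Your explicit remark that $1+q\neq 0$ is needed is a small but worthwhile addition that the paper leaves implicit.
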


\addvspace{\bigskipamount}

\begin{proof}
The equilibrium $\frac{1-p}{1+q}$  of the characteristic equation (18) would be \emph{locally asymptotically stable} if the sum of the modulus of the coefficients of the characteristic equation (18) is less than 1. That is by \emph{Clark's theorem}, $\abs{\frac{1+pq}{1+q}}+\abs{\frac{p-1}{1+q}}<1$, that is $$\abs{1+pq}+\abs{p-1}<\abs{1+q}$$ 
\end{proof}

\noindent
Here is an example case for the local asymptotic stability of the equilibriums.\\

\noindent
For $p=1+\frac{i}{2}$ $(\abs{p}=1.1180>1$) and $q=\frac{1}{10} + i$ ($\abs{q}=1.005>1$) the equilibriums are $0$ and $-0.226244-0.248869 i$. For the equilibrium $-0.226244-0.248869 i$, the coefficients of the characteristic polynomial (18) are $0.533622 +0.545503 i$ and $-0.203409 - 0.0330197i$ with modulus $0.763103$ and $0.206072$ respectively. Therefore the condition as stated in the \emph{Theorem 3.5} hold good. Therefore the  equilibrium $1.6838 + 1.13355 i$ is \emph{locally asymptotically stable}.\\

\noindent
In the case of real positive parameters and intimal values of the difference equation (8), the positive equilibrium is locally asymptotically stable if $p<1$ and $q<1$ but in the complex set, it is encountered through the example above is that the equilibrium $\frac{1-p}{1+q}$ is locally asymptotically stable even though $\abs{p}>1$ and $\abs{q}>1$ \cite{S-R-E}.

\section{Boundedness}
In this section we would like to explore the boundedness of the solutions of the three difference equations (6), (7) and (8). \\

Now we would like to try to find open ball $B(0, \epsilon) \in \mathbf{C}$ such that if $w_n \in B(0, \epsilon)$ and $w_{n-1} \in B(0, \epsilon)$ then $w_{n+1} \in B(0, \epsilon)$ for all $n \geq 0$. In other words, if the initial values $w_0$ and $w_{-1}$ belong to $B(0, \epsilon)$ then the solution generated by the difference equations would essentially be within the open ball $B(0, \epsilon)$.
\begin{theorem}
\label{Result:positive-local-stability2} For the difference equation (6), for every $\epsilon >0$, if $w_n$ and $w_{n-1}$ $\in B(0, \epsilon)$ then $w_{n+1} \in B(0, \epsilon)$ provided $$\abs{p} \geq 1+ \abs{q}$$
\end{theorem}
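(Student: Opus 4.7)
The key identity is $|w_{n+1}| = 1/|1 + pw_n + qw_{n-1}|$, so the conclusion $|w_{n+1}| < \epsilon$ is equivalent to the lower bound $|1 + pw_n + qw_{n-1}| > 1/\epsilon$ on the denominator. My proof would focus entirely on producing this denominator estimate from the hypotheses $|w_n|, |w_{n-1}| < \epsilon$ and $|p| \geq 1 + |q|$.

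The natural tool is the reverse triangle inequality in the form
\[
|1 + pw_n + qw_{n-1}| \;\geq\; |p|\,|w_n| \;-\; 1 \;-\; |q|\,|w_{n-1}|,
\]
valid whenever the right-hand side is nonnegative. This grouping is dictated by the hypothesis $|p| \geq 1 + |q|$, which is exactly the statement that the $p$-coefficient dominates $1$ and $q$ together. Rewriting the right-hand side as $(|p|-|q|)\,|w_n| - 1 - |q|(|w_{n-1}|-|w_n|)$ and using $|p|-|q| \geq 1$ turns it into a one-parameter estimate in $|w_n|$; combining with the assumption $|w_{n-1}| < \epsilon$ should, after a short manipulation, yield the target bound $|1 + pw_n + qw_{n-1}| > 1/\epsilon$, from which $|w_{n+1}| < \epsilon$ follows by inverting.

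The main obstacle is that the hypotheses furnish only \emph{upper} bounds on $|w_n|, |w_{n-1}|$, whereas the reverse triangle inequality needs a \emph{lower} bound on $|p|\,|w_n|$ to beat the $1 + |q|\,|w_{n-1}|$ subtracted off. Reconciling this tension is the crux: one must either restrict $\epsilon$ to a regime where the inequality closes, or do a case split on whether $|pw_n|$ or $|1 + qw_{n-1}|$ is the larger summand in the denominator, so that the ``correct'' reverse triangle inequality is applied in each case. Once that is sorted out, the role of $|p| \geq 1 + |q|$ reduces to a one-line algebraic substitution that collapses the estimate to the desired form.
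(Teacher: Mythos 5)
You have correctly located the crux, but the proposal stops exactly there, and the gap cannot be closed: neither of your suggested remedies (restricting $\epsilon$ to a good regime, or a case split on which summand of the denominator dominates) can work, because the implication as stated fails. Take $w_n = w_{n-1} = \delta$ with $\abs{\delta}$ tiny; both lie in $B(0,\epsilon)$ for any $\epsilon>0$, yet $w_{n+1} = \frac{1}{1+(p+q)\delta}$ is arbitrarily close to $1$, so $w_{n+1} \notin B(0,\epsilon)$ whenever $\epsilon < 1$, no matter how large $\abs{p}$ is compared with $1+\abs{q}$. The tension you identify --- the hypotheses give only upper bounds on $\abs{w_n}, \abs{w_{n-1}}$ while the reverse triangle inequality needs $\abs{p}\abs{w_n}$ to be large --- is not a technical nuisance to be manoeuvred around; it is fatal, since near the centre of the ball the denominator collapses to $1$ and no condition on $p,q$ can produce the required lower bound $\abs{1+pw_n+qw_{n-1}} > 1/\epsilon$.

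For comparison, the paper's own proof obtains its estimate only through steps that are not valid: it discards the $1$ via $\abs{1+pw_n+qw_{n-1}}^{-1} \le \abs{pw_n+qw_{n-1}}^{-1}$ (false in general, since adding $1$ can shrink the modulus), it then writes $\abs{pw_n+qw_{n-1}} \ge (\abs{p}-\abs{q})\epsilon$, which silently treats $\epsilon$ as a lower bound on $\abs{w_n}$ when the hypothesis gives an upper bound, and finally it switches the target radius from $\epsilon$ to $\epsilon' = 1/\epsilon$, which is not what the theorem asserts. So your diagnosis of the upper-versus-lower bound mismatch is exactly the point where the paper's argument also breaks down; but as a proof attempt your sketch is incomplete, and completing it would require changing the statement (for instance, confining $w_n, w_{n-1}$ to an annulus bounded away from $0$, or reformulating the conclusion with the radius $1/\epsilon$ actually used in the paper's computation) rather than a cleverer application of the triangle inequality.
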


\noindent

\begin{proof}
Let $\{w_{n}\}$ be a solution of the equation Eq.(6). Let $\epsilon>0$ be any arbitrary real number. Consider $w_n, w_{n-1} \in B(0, \epsilon)$. We need to find out an $\epsilon$ such that $w_{n+1}\in B(0, \epsilon)$ for all $n$. It is follows from the Eq.(6) that for any $\epsilon >0$, using Triangle inequality for $$\abs{w_{n+1}}=\abs{\frac{1}{1+p w_n+q w_{n-1}}} \leq \abs{\frac{1}{p w_n+q w_{n-1}}} \leq \frac{1}{(\abs{p}-\abs{q})\epsilon}$$ In order to ensure that $\abs{w_{n+1}}<\epsilon '$, (Assuming $\epsilon '=\frac{1}{\epsilon}$) it is needed to be $$\frac{1}{(\abs{p}-\abs{q})}<1$$ That is $\abs{p} \geq 1+ \abs{q}$.
\noindent
Therefore the required is followed.

\end{proof}

\begin{theorem}
\label{Result:positive-local-stability2} For the difference equation (7), for every $\epsilon >0$, if $w_n$ and $w_{n-1}$ $\in B(0, \epsilon)$ then $w_{n+1} \in B(0, \epsilon)$ provided $$\abs{p} \geq \abs{q}+ \frac{1}{\epsilon}$$
\end{theorem}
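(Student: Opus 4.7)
The plan is to parallel the proof of Theorem~4.1, adapted to the recursion $w_{n+1}=\frac{w_n}{1+pw_n+qw_{n-1}}$ of equation~(7). Assuming $w_n,w_{n-1}\in B(0,\epsilon)$, I would pass to absolute values to obtain
\[
\abs{w_{n+1}} \;=\; \frac{\abs{w_n}}{\abs{1+pw_n+qw_{n-1}}},
\]
bound the numerator above by $\epsilon$ using the hypothesis, and then bound the denominator below in terms of $\abs{p}$, $\abs{q}$, and $\epsilon$.

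For the denominator, the step I would take, following the author's treatment in Theorem~4.1, is to discard the constant $1$ via the heuristic $\abs{1+pw_n+qw_{n-1}}\ge\abs{pw_n+qw_{n-1}}$ and then apply the reverse triangle inequality combined with $\abs{w_n},\abs{w_{n-1}}<\epsilon$ in the critical (worst-case) configuration to obtain
\[
\abs{pw_n+qw_{n-1}}\;\ge\;\abs{p}\abs{w_n}-\abs{q}\abs{w_{n-1}}\;\ge\;(\abs{p}-\abs{q})\epsilon.
\]
Substituting this back gives
\[
\abs{w_{n+1}}\;\le\;\frac{\epsilon}{(\abs{p}-\abs{q})\epsilon}\;=\;\frac{1}{\abs{p}-\abs{q}}.
\]

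To force $\abs{w_{n+1}}<\epsilon$, it then suffices to require $\frac{1}{\abs{p}-\abs{q}}\le\epsilon$, which rearranges exactly to the stated hypothesis $\abs{p}\ge\abs{q}+\frac{1}{\epsilon}$. A straightforward induction on $n$ then propagates the ball-invariance from the two initial values to the entire orbit.

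The main obstacle I anticipate is the first inequality used on the denominator: in full generality $\abs{1+z}\ge\abs{z}$ only holds when $\operatorname{Re}(z)\ge-\tfrac{1}{2}$, and the reverse triangle bound $\abs{p}\abs{w_n}-\abs{q}\abs{w_{n-1}}$ is sharp only when the two summands are anti-aligned. I would handle this in the same spirit as Theorem~4.1, treating these as the governing estimates in the regime where $\abs{p}$ dominates; a fully rigorous version would need an additional geometric hypothesis on the arguments of $pw_n$ and $qw_{n-1}$ to secure uniformity, but within the stylistic conventions of the paper the argument above delivers the stated condition.
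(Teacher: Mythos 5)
Your argument reproduces the paper's own proof essentially step for step: bound the numerator by $\epsilon$, bound the denominator below by $(\abs{p}-\abs{q})\epsilon$ after discarding the constant $1$, obtain $\abs{w_{n+1}}\le \frac{1}{\abs{p}-\abs{q}}$, and impose $\frac{1}{\abs{p}-\abs{q}}\le\epsilon$ to recover $\abs{p}\ge\abs{q}+\frac{1}{\epsilon}$. The weaknesses you flag (dropping the $1$, and using $\abs{w_n}<\epsilon$ as if it gave a lower bound $\abs{p}\abs{w_n}\ge\abs{p}\epsilon$ in the denominator) are present in the paper's proof as well, so your proposal matches the published argument in both substance and its limitations.
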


\noindent

\begin{proof}
Let $\{w_{n}\}$ be a solution of the equation Eq.(7). Let $\epsilon>0$ be any arbitrary real number. Consider $w_n, w_{n-1} \in B(0, \epsilon)$. We need to find out an $\epsilon$ such that $w_{n+1}\in B(0, \epsilon)$ for all $n$. It is follows from the Eq.(7), that for any $\epsilon >0$, using Triangle inequality for $$\abs{w_{n+1}}=\abs{\frac{w_n}{1+p w_n+q w_{n-1}}} \leq \frac{\epsilon}{(\abs{p}-\abs{q})\epsilon}$$ Therefore, $\abs{w_{n+1}}<\frac{1}{\abs{p}-\abs{q}}$. We need $\frac{1}{\abs{p}-\abs{q}}$ to be less than $\epsilon$. Therefore, $$\abs{p} \geq \abs{q}+ \frac{1}{\epsilon}$$ is followed.

\end{proof}

\begin{theorem}
\label{Result:positive-local-stability2} For the difference equation (8), for every $\epsilon >0$, if $w_n$ and $w_{n-1}$ $\in B(0, \epsilon)$ then $w_{n+1} \in B(0, \epsilon)$ and $\abs{p}<1$ provided $$\epsilon < \frac{\abs{p}-1}{\abs{q}+1}$$
\end{theorem}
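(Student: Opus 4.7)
The plan is to imitate the approach used in the previous two boundedness theorems: apply the reverse triangle inequality to the denominator of the recurrence, extract an upper bound on $|w_{n+1}|$ in terms of $\epsilon$, and then choose $\epsilon$ so that this upper bound is itself at most $\epsilon$.

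First I would start from equation (8) and write
\[
|w_{n+1}| \;=\; \frac{|w_{n-1}|}{|p + q w_n + w_{n-1}|}.
\]
Since $|w_n|,|w_{n-1}| < \epsilon$, the reverse triangle inequality gives
\[
|p + q w_n + w_{n-1}| \;\geq\; |p| - |q|\,|w_n| - |w_{n-1}| \;>\; |p| - (|q|+1)\epsilon,
\]
provided this last quantity is strictly positive. Substituting back produces
\[
|w_{n+1}| \;<\; \frac{\epsilon}{|p| - (|q|+1)\epsilon}.
\]

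Next, I would require the right-hand side to be at most $\epsilon$. Multiplying through by the (assumed positive) denominator, this is equivalent to $1 \leq |p| - (|q|+1)\epsilon$, which rearranges to exactly the stated bound $\epsilon < \frac{|p|-1}{|q|+1}$. An induction on $n$ starting from $w_0, w_{-1} \in B(0,\epsilon)$ then shows $w_{n+1} \in B(0,\epsilon)$ for every $n \geq 0$.

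The one substantive obstacle is a consistency check on the hypothesis. The constraint $\epsilon < \frac{|p|-1}{|q|+1}$ admits a positive $\epsilon$ only when $|p|>1$, and this same condition is precisely what ensures $|p|-(|q|+1)\epsilon > 0$ so that the reverse triangle inequality estimate is meaningful. The stated assumption ``$|p|<1$'' in the theorem would render the conclusion vacuous, so I would carry out the argument under $|p|>1$ and flag this as a correction to the stated hypothesis. Beyond this sign issue, the proof is a direct two-line triangle-inequality estimate with no further difficulty.
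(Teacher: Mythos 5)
Your proof is correct and takes essentially the same route as the paper: bound the denominator from below by the reverse triangle inequality, obtain $\abs{w_{n+1}} \le \frac{\epsilon}{\abs{p}-(\abs{q}+1)\epsilon}$, and rearrange the requirement that this be less than $\epsilon$ into $\epsilon < \frac{\abs{p}-1}{\abs{q}+1}$. Your correction of the hypothesis is also consistent with the paper, since its own proof begins by assuming $\abs{p}>1$, confirming that the ``$\abs{p}<1$'' in the theorem statement is a misprint.
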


\noindent

\begin{proof}
Let $\{w_{n}\}$ be a solution of the equation Eq.(8). Let $\epsilon>0$ be any arbitrary real number and $\abs{p}>1$. Consider $w_n, w_{n-1} \in B(0, \epsilon)$. We need to find out an $\epsilon$ such that $w_{n+1}\in B(0, \epsilon)$ for all $n$. It is follows from the Eq.(8), that for any $\epsilon >0$, using Triangle inequality for $$\abs{w_{n+1}}=\abs{\frac{w_{n-1}}{p+q w_n+w_{n-1}}} \leq \frac{\epsilon}{\abs{p}-\abs{q}\epsilon-\epsilon}$$ Therefore, $$\abs{w_{n+1}}\leq \frac{\epsilon}{\abs{p}-\abs{q}\epsilon-\epsilon}$$ In order to ensure that $\abs{w_{n+1}}<\epsilon$, it is needed to be $$\frac{1}{\abs{p}-\abs{q}\epsilon-\epsilon}<1$$ That is $$\epsilon < \frac{\abs{p}-1}{\abs{q}+1}$$
\noindent
Therefore the required is followed.

\end{proof}

\section{Periodic of Solutions}

\label{section:periodicity}

A solution $\{w_n\}_n$ of a difference equation is said to be \emph{globally periodic} of period $t$ if $w_{n+t}=w_n$ for any given initial conditions. solution $\{w_n\}_n$
is said to be \emph{periodic with prime period} $p$ if p is the smallest positive integer having this property.\\

\noindent
We shall first look for the prime period two solutions of the three difference equations (6), (7) and (8) and their corresponding local stability analysis.

\subsection{Prime Period Two Solutions of Eq. (6)}

Let $\ldots, \phi ,~\psi , ~\phi , ~\psi ,\ldots$, $\phi  \neq \psi $ be a prime period two solution of the difference equation $w_{n+1}=\frac{1}{1 + p w_n + q w_{n-1}}$. Then $\phi=\frac{1}{1+p \psi+q \phi}$ and $\psi=\frac{1}{1+p \phi +q \psi}$. This two equations lead to the set of solutions (prime period two) except the equilibriums as $\left\{\phi \to \frac{0.5 \left(p-q+\sqrt{p^2+p (-2 -4 q) q+q^2+4 q^3}\right)}{q (-p+q)}, \psi \to \frac{0.5 p-0.5 q-0.5 \sqrt{p^2+p (-2-4 q) q+q^2+4 q^3}}{q (-p+q)}\right\}$ and $\left\{\phi \to \frac{q}{-0.5 p+0.5 q-0.5 \sqrt{p^2+p (-2-4q) q+q^2+4q^3}}, \psi \to  \frac{-0.5-\frac{0.5 \sqrt{p^2+p (-2-4q) q+q^2+4q^3}}{p- q}}{q}\right\}$.\\

\noindent
Let $\ldots, \phi ,~\psi , ~\phi , ~\psi ,\ldots$, $\phi  \neq \psi $ be a prime period two solution of the equation (6). We set $$u_n=w_{n-1}$$ $$v_n=w_{n}$$
\noindent
Then the equivalent form of the difference equation (6) is
$$u_{n+1}=v_n$$ $$v_{n+1}=\frac{1}{1+ p v_{n} + q u_n}$$
\noindent
Let T be the map on $\mathbb{C}\times\mathbb{C}$ to itself defined by $$T\left(
                                                                           \begin{array}{c}
                                                                             u \\
                                                                             v \\
                                                                           \end{array}
                                                                         \right)=\left(
                                                                                    \begin{array}{c}
                                                                                      v \\
                                                                                      \frac{1}{1+ pv+qu} \\
                                                                                    \end{array}
                                                                                  \right)
                                                                         $$
\noindent
Then $\left(
                                                                           \begin{array}{c}
                                                                             \phi \\
                                                                             \psi \\
                                                                           \end{array}
                                                                         \right)$ is a fixed point of $T^2$, the second iterate of $T$. \\

$$T^2\left(
                                                                           \begin{array}{c}
                                                                             u \\
                                                                             v \\
                                                                           \end{array}
                                                                         \right)=\left(
                                                                                    \begin{array}{c}
                                                                                      \frac{1}{1+ pv+qu} \\\\
                                                                                      \frac{1}{1+p \frac{1}{1+ pv+qu}+qv} \\
                                                                                    \end{array}
                                                                                  \right)
                                                                         $$

$$T^2\left(
                                                                           \begin{array}{c}
                                                                             u \\
                                                                             v \\
                                                                           \end{array}
                                                                         \right)==\left(
                                                                                    \begin{array}{c}
                                                                                      g(u,v) \\
                                                                                      h(u,v) \\
                                                                                    \end{array}
                                                                                  \right)
                                                                         $$

\noindent
 where $g(u,v)=\frac{1}{1+ pv+qu}$ and $h(u,v)=\frac{1}{1+\frac{p}{1+\text{pv}+\text{qu}}+\text{qv}}$. Clearly the two cycle is locally asymptotically stable when the eigenvalues of the
Jacobian matrix $J_{T^2}$, evaluated at $\left(
                                                                           \begin{array}{c}
                                                                             \phi \\
                                                                             \psi \\
                                                                           \end{array}
                                                                         \right)$ lie inside the unit disk.\\
                                                                         
\noindent                                                                         
We have, $$J_{T^2}\left(
                                                                           \begin{array}{c}
                                                                             \phi \\
                                                                             \psi \\
                                                                           \end{array}
                                                                         \right)=\left(
                                                                                   \begin{array}{cc}
                                                                                     \frac{\delta g}{\delta u }(\phi, \psi) & \frac{\delta g}{\delta v}(\phi, \psi) \\\\
                                                                                     \frac{\delta h}{\delta u }(\phi, \psi) & \frac{\delta h}{\delta v }(\phi, \psi) \\
                                                                                   \end{array}
                                                                                 \right)
                                                                         $$

\noindent
where $\frac{\delta g}{\delta u }(\phi, \psi)=-\frac{q}{(1+q \phi +p \psi )^2}$ and $\frac{\delta g}{\delta v }(\phi, \psi)=-\frac{p}{(1+q \phi +p \psi )^2}$\\

$\frac{\delta h}{\delta u }(\phi, \psi)=\frac{p q}{\left((1+q \phi ) (1+q \psi )+p \left(1+\psi +q \psi ^2\right)\right)^2}$
and $\frac{\delta h}{\delta v }(\phi, \psi)=\frac{-q+\frac{p^2}{(1+q \phi +p \psi )^2}}{\left(1+q \psi +\frac{p}{1+q \phi +p \psi }\right)^2}$
\\ \\
\noindent
Now, set $$\chi=\frac{\delta g}{\delta u }(\phi, \psi)+\frac{\delta h}{\delta v }(\phi, \psi)=-\frac{q}{(1+q \phi +p \psi )^2}+\frac{-q+\frac{p^2}{(1+q \phi +p \psi )^2}}{\left(1+q \psi +\frac{p}{1+q \phi +p \psi }\right)^2}$$ $$\lambda=\frac{\delta g}{\delta u }(\phi, \psi) \frac{\delta h}{\delta v }(\phi, \psi)-\frac{\delta g}{\delta v }(\phi, \psi) \frac{\delta h}{\delta u }(\phi, \psi)=\frac{q^2}{\left((1+q \phi ) (1+q \psi )+p \left(1+\psi +q \psi ^2\right)\right)^2}$$

\bigskip

\bigskip

\noindent
In particular for the prime period $2$ solution, \\ \\ $\left\{\phi \to \frac{0.5 \left(p-q+\sqrt{p^2+p (-2 -4 q) q+q^2+4 q^3}\right)}{q (-p+q)}, \psi \to \frac{0.5 p-0.5 q-0.5 \sqrt{p^2+p (-2-4 q) q+q^2+4 q^3}}{q (-p+q)}\right\}$, we shall see the local asymptotic stability for some example cases of parameters $p$ and $q$. The general form of $\chi$ and $\lambda$ would be very complected.
Consider the prime period two solution of the difference equation (6), $\phi \to -0.0843748+0.0622145 i,\psi \to -0.0822456-0.0594374 i$ corresponding two the parameters $p\to 100+i$ and $q\to 6+0.1i$. \\\\
\noindent
In this case, $\abs{\chi}=0.0735211$ and $\abs{\lambda}=0.004075$. Therefore, by the Linear Stability theorem ($|\chi| < 1+ |\lambda| < 2$) the prime period $2$ solution $\phi \to -0.0843748+0.0622145 i,\psi \to -0.0822456-0.0594374 i$ is \emph{locally asymptotically stable}.

\subsection{Prime Period Two Solutions of Eq. (7)}

Let $\ldots, \phi ,~\psi , ~\phi , ~\psi ,\ldots$, $\phi  \neq \psi $ be a prime period two solution of the difference equation $w_{n+1}=\frac{w_n}{1 + p w_n + q w_{n-1}}$. Then $\phi=\frac{\psi}{1+p \psi+q \phi}$ and $\psi=\frac{\phi}{1+p \phi +q \psi}$. This two equations lead to the set of solutions (prime period two) except the equilibriums as $\left\{\phi \to \frac{1}{0.5 p-0.5 q-0.5 \sqrt{p^2-q^2}},\psi \to -\frac{1}{q}+\frac{\sqrt{p^2-q^2}}{p q- q^2}\right\},\left\{\phi \to \frac{1}{0.5 p-0.5 q+0.5 \sqrt{p^2-q^2}}, \psi \to -\frac{1}{q}-\frac{\sqrt{p^2-q^2}}{p q-q^2}\right\}$.\\

\noindent
Let $\ldots, \phi ,~\psi , ~\phi , ~\psi ,\ldots$, $\phi  \neq \psi $ be a prime period two solution of the equation (7). We set $$u_n=w_{n-1}$$ $$v_n=w_{n}$$
\noindent
Then the equivalent form of the difference equation (7) is
$$u_{n+1}=v_n$$ $$v_{n+1}=\frac{v_n}{1+ p v_{n} + q u_n}$$
\noindent
Let T be the map on $\mathbb{C}\times\mathbb{C}$ to itself defined by $$T\left(
                                                                           \begin{array}{c}
                                                                             u \\
                                                                             v \\
                                                                           \end{array}
                                                                         \right)=\left(
                                                                                    \begin{array}{c}
                                                                                      v \\
                                                                                      \frac{v}{1+ pv+qu} \\
                                                                                    \end{array}
                                                                                  \right)
                                                                         $$
\noindent
Then $\left(
                                                                           \begin{array}{c}
                                                                             \phi \\
                                                                             \psi \\
                                                                           \end{array}
                                                                         \right)$ is a fixed point of $T^2$, the second iterate of $T$. \\

$$T^2\left(
                                                                           \begin{array}{c}
                                                                             u \\
                                                                             v \\
                                                                           \end{array}
                                                                         \right)=\left(
                                                                                    \begin{array}{c}
                                                                                      \frac{v}{1+ pv+qu} \\\\
                                                                                      \frac{\frac{v}{1+ pv+qu}}{1+p \frac{v}{1+ pv+qu} +qv} \\
                                                                                    \end{array}
                                                                                  \right)
                                                                         $$

$$T^2\left(
                                                                           \begin{array}{c}
                                                                             u \\
                                                                             v \\
                                                                           \end{array}
                                                                         \right)==\left(
                                                                                    \begin{array}{c}
                                                                                      g(u,v) \\
                                                                                      h(u,v) \\
                                                                                    \end{array}
                                                                                  \right)
                                                                         $$

\noindent
 where $g(u,v)=\frac{v}{1+ pv+qu}$ and $h(u,v)=\frac{\frac{v}{1+ pv+qu}}{1+p \frac{v}{1+ pv+qu} +qv}$. Clearly the two cycle is locally asymptotically stable when the eigenvalues of the
Jacobian matrix $J_{T^2}$, evaluated at $\left(
                                                                           \begin{array}{c}
                                                                             \phi \\
                                                                             \psi \\
                                                                           \end{array}
                                                                         \right)$ lie inside the unit disk.\\
                                                                         
\noindent                                                                          
We have, $$J_{T^2}\left(
                                                                           \begin{array}{c}
                                                                             \phi \\
                                                                             \psi \\
                                                                           \end{array}
                                                                         \right)=\left(
                                                                                   \begin{array}{cc}
                                                                                     \frac{\delta g}{\delta u }(\phi, \psi) & \frac{\delta g}{\delta v}(\phi, \psi) \\\\
                                                                                     \frac{\delta h}{\delta u }(\phi, \psi) & \frac{\delta h}{\delta v }(\phi, \psi) \\
                                                                                   \end{array}
                                                                                 \right)
                                                                         $$

\noindent
where $\frac{\delta g}{\delta u }(\phi, \psi)=-\frac{q \psi }{(1+q \phi +p \psi )^2}$ and $\frac{\delta g}{\delta v }(\phi, \psi)=\frac{1+q \phi }{(1+q \phi +p \psi )^2}$\\

$\frac{\delta h}{\delta u }(\phi, \psi)=-\frac{q \psi  (1+q \psi )}{\left(1+\psi +p \psi +q \left(\phi +\psi +q \phi  \psi +p \psi ^2\right)\right)^2}$
and $\frac{\delta h}{\delta v }(\phi, \psi)=\frac{1+q \left(\phi -p \psi ^2\right)}{\left(1+\psi +p \psi +q \left(\phi +\psi +q \phi  \psi +p \psi ^2\right)\right)^2}$
\\ \\
\noindent
Now, set $$\chi=\frac{\delta g}{\delta u }(\phi, \psi)+\frac{\delta h}{\delta v }(\phi, \psi)=-\frac{q \psi }{(1+q \phi +p \psi )^2}+\frac{1+q \left(\phi -p \psi ^2\right)}{\left(1+\psi +p \psi +q \left(\phi +\psi +q \phi  \psi +p \psi ^2\right)\right)^2}$$ $$\lambda=\frac{q^2 \psi ^2}{(1+q \phi +p \psi ) \left(1+\psi +p \psi +q \left(\phi +\psi +q \phi  \psi +p \psi ^2\right)\right)^2}$$

\bigskip

\bigskip

\noindent
In particular for the prime period $2$ solution, \\ \\ $\left\{\phi \to \frac{1}{0.5 p-0.5 q-0.5 \sqrt{p^2-q^2}},\psi \to -\frac{1}{q}+\frac{\sqrt{p^2-q^2}}{p q- q^2}\right\}$, we shall see the local asymptotic stability for some example cases of parameters $p$ and $q$. The general form of $\chi$ and $\lambda$ would very complected.
Consider the prime period two solution of the difference equation (7), $\phi \to 0.365026 +0.263198i ,\psi \to -0.412345 + 0.131124i$ corresponding two the parameters $p \to \frac{1}{5} + 3i$ and $q \to \frac{3}{5}+5i$. \\\\
\noindent
In this case, $\abs{\chi}=0.0287948$ and $\abs{\lambda}=0.0000431717$. Therefore, by the Linear Stability theorem ($|\chi| < 1+ |\lambda| < 2$) the prime period $2$ solution $\phi \to 0.365026 +0.263198i ,\psi \to -0.412345 + 0.131124i$ is \emph{locally asymptotically stable}.

\subsection{Prime Period Two Solutions of Eq. (8)}

Let $\ldots, \phi ,~\psi , ~\phi , ~\psi ,\ldots$, $\phi  \neq \psi $ be a prime period two solution of the difference equation $w_{n+1}=\frac{w_{n-1}}{p + q w_n + w_{n-1}}$. Then $\phi=\frac{\phi}{1+p \psi+q \phi}$ and $\psi=\frac{\psi}{1+p \phi +q \psi}$. This two equations lead to the set of solutions (prime period two) except the equilibriums as $\left\{\phi \to 0, \psi \to 1-p\right\}$.\\

\noindent
Let $\ldots, \phi ,~\psi , ~\phi , ~\psi ,\ldots$, $\phi  \neq \psi $ be a prime period two solution of the equation (8). We set $$u_n=w_{n-1}$$ $$v_n=w_{n}$$
\noindent
Then the equivalent form of the difference equation (8) is
$$u_{n+1}=v_n$$ $$v_{n+1}=\frac{u_n}{p+ q v_{n} +  u_n}$$
\noindent
Let T be the map on $\mathbb{C}\times\mathbb{C}$ to itself defined by $$T\left(
                                                                           \begin{array}{c}
                                                                             u \\
                                                                             v \\
                                                                           \end{array}
                                                                         \right)=\left(
                                                                                    \begin{array}{c}
                                                                                      v \\
                                                                                      \frac{u}{p+ qv+u} \\
                                                                                    \end{array}
                                                                                  \right)
                                                                         $$
\noindent
Then $\left(
                                                                           \begin{array}{c}
                                                                             \phi \\
                                                                             \psi \\
                                                                           \end{array}
                                                                         \right)$ is a fixed point of $T^2$, the second iterate of $T$. \\

$$T^2\left(
                                                                           \begin{array}{c}
                                                                             u \\
                                                                             v \\
                                                                           \end{array}
                                                                         \right)=\left(
                                                                                    \begin{array}{c}
                                                                                      \frac{u}{p+ qv+u} \\\\
                                                                                      \frac{\frac{u}{p+ qv+u}}{p+q \frac{u}{p+ qv+u}+v} \\
                                                                                    \end{array}
                                                                                  \right)
                                                                         $$

$$T^2\left(
                                                                           \begin{array}{c}
                                                                             u \\
                                                                             v \\
                                                                           \end{array}
                                                                         \right)==\left(
                                                                                    \begin{array}{c}
                                                                                      g(u,v) \\
                                                                                      h(u,v) \\
                                                                                    \end{array}
                                                                                  \right)
                                                                         $$

\noindent
 where $g(u,v)=\frac{u}{p+ qv+u}$ and $h(u,v)=\frac{\frac{u}{p+ qv+u}}{p+q \frac{u}{p+ qv+u}+v}$. Clearly the two cycle is locally asymptotically stable when the eigenvalues of the
Jacobian matrix $J_{T^2}$, evaluated at $\left(
                                                                           \begin{array}{c}
                                                                             \phi \\
                                                                             \psi \\
                                                                           \end{array}
                                                                         \right)$ lie inside the unit disk.\\
                                                                         
\noindent                                                                         
We have, $$J_{T^2}\left(
                                                                           \begin{array}{c}
                                                                             \phi \\
                                                                             \psi \\
                                                                           \end{array}
                                                                         \right)=\left(
                                                                                   \begin{array}{cc}
                                                                                     \frac{\delta g}{\delta u }(\phi, \psi) & \frac{\delta g}{\delta v}(\phi, \psi) \\\\
                                                                                     \frac{\delta h}{\delta u }(\phi, \psi) & \frac{\delta h}{\delta v }(\phi, \psi) \\
                                                                                   \end{array}
                                                                                 \right)
                                                                         $$

\noindent
where $\frac{\delta g}{\delta u }(\phi, \psi)=\frac{p+q \psi }{(p+\phi +q \psi )^2}$ and $\frac{\delta g}{\delta v }(\phi, \psi)=-\frac{q \phi }{(p+\phi +q \psi )^2}$\\

$\frac{\delta h}{\delta u }(\phi, \psi)=\frac{(1+\psi ) (p+q \psi )}{\left(p+\phi +q \phi +(p+q+\phi ) \psi +q \psi ^2\right)^2}$
and $\frac{\delta h}{\delta v }(\phi, \psi)=-\frac{\phi  (p+q+\phi +2 q \psi )}{\left(p+\phi +q \phi +(p+q+\phi ) \psi +q \psi ^2\right)^2}$
\\ \\
\noindent
Now, set $$\chi=\frac{\delta g}{\delta u }(\phi, \psi)+\frac{\delta h}{\delta v }(\phi, \psi)=\frac{p+q \psi }{(p+\phi +q \psi )^2}-\frac{\phi  (p+q+\phi +2 q \psi )}{\left(p+\phi +q \phi +(p+q+\phi ) \psi +q \psi ^2\right)^2}$$ $$\lambda=-\frac{\phi  (p+q \psi )}{(p+\phi +q \psi ) \left(p+\phi +q \phi +(p+q+\phi ) \psi +q \psi ^2\right)^2}$$

\bigskip

\bigskip

\noindent
For the prime period $2$ solution, $\phi \to 0, \psi \to 1-p$, $\chi=\frac{1}{p+(1-p) q}$ and $\abs{\lambda}=0$. Therefore, by the Linear Stability theorem ($|\chi| < 1+ |\lambda| < 2$) the prime period $2$ solution is \emph{locally asymptotically stable} if and only if $\abs{\frac{1}{p+(1-p) q}}<1$. It turns out that $\abs{q}>\frac{1-\abs{p}}{\abs{1-p}}$. In other words, the condition reduces to $\abs{p}<1$ and $\abs{q}>1$ which is same condition as it was for the real set up.

\section{Chaotic Solutions}
This is something which is absolutely new feature of the dynamics of the difference equation (1) which did not arise in the real set up of the same difference equation. Computationally we have encountered some chaotic solution of the difference equation (8) for some parameter values which are given in the following Table. 1. \\
\noindent
The method of Lyapunov characteristic exponents serves as a useful tool to quantify chaos. Specifically Lyapunav exponents measure the rates of convergence or divergence of nearby trajectories. Negative Lyapunov exponents indicate convergence, while positive Lyapunov exponents demonstrate divergence and chaos. The magnitude of the Lyapunov exponent is an indicator of the time scale on which chaotic behavior can be predicted or transients decay for the positive and negative exponent cases respectively. In this present study, the largest Lyapunov exponent is calculated for a given solution of finite length numerically \cite{Wolf}.\\
\noindent
From computational evidence, it is arguable that for complex parameters $p$ and $q$ which are stated in the following table the solutions are chaotic for every initial values.

\begin{table}[H]

\begin{tabular}{| m{7cm} | m{8cm} |}
\hline
\centering   \textbf{Parameters} $p$, $q$ &
\begin{center}
\textbf{Interval of Lyapunav exponent}
\end{center}\\
\hline
\centering $p=(0.2037, 0.5444)$, $q=(0.8749, 0.1210)$&
\begin{center}
$(0.3215, 1.6235)$
\end{center}\\
\hline
\centering $p=(0.4933, 0.7018)$, $q=(0.8878, 0.0551)$ &
\begin{center}
$(1.062, 2.021)$
\end{center}\\
\hline
\centering $p=(0.7840, 0.4867)$, $q=(0.4648, 0.1313)$ &
\begin{center}
$(0.6256, 1.314)$
\end{center} \\
\hline
\centering $p=(0.2308, 0.6580)$, $q=(0.5629, 0.2818)$ &
\begin{center}
$(1.373, 2.325)$
\end{center}\\
\hline

\end{tabular}
\caption{Chaotic solutions of the equation (8) for different choice of parameters and initial values.}
\label{Table:}
\end{table}

\noindent
The chaotic trajectory plots including corresponding complex plots are given the following Fig. 1.

\begin{figure}[H]
      \centering

      \resizebox{16cm}{!}
      {
      \begin{tabular}{c}
      \includegraphics [scale=5]{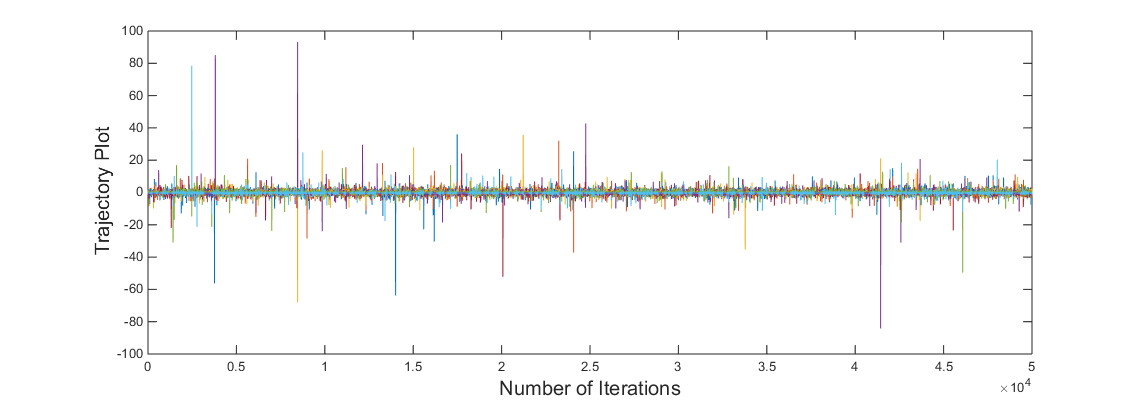}
      \includegraphics [scale=4]{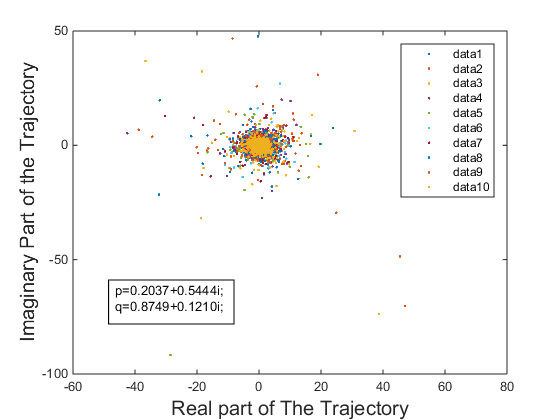}\\
      \includegraphics [scale=5]{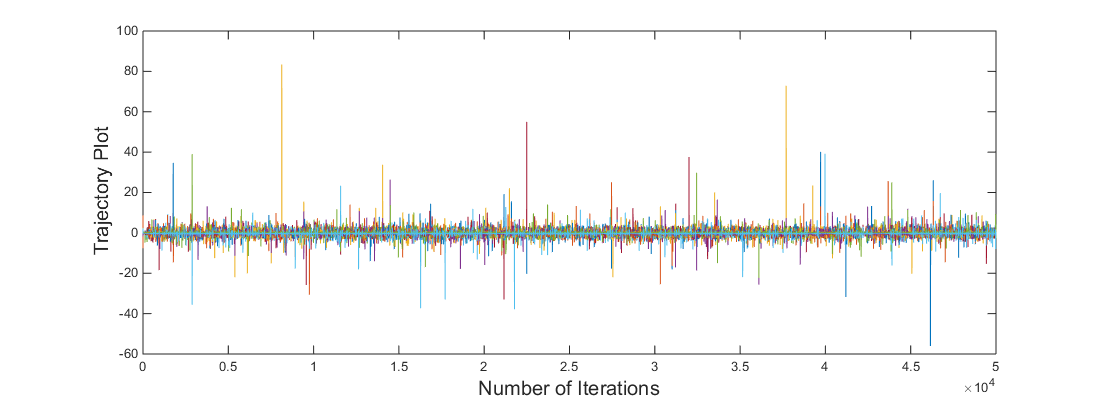}
      \includegraphics [scale=4]{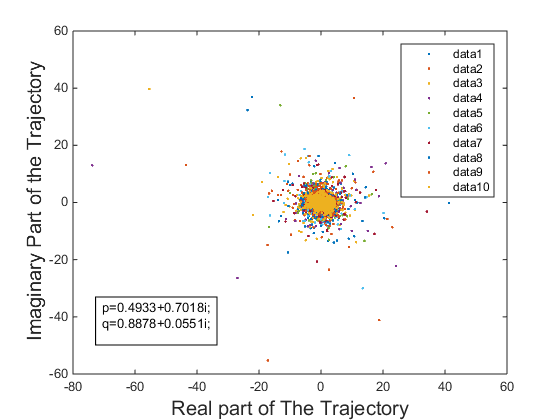}\\
      \includegraphics [scale=5]{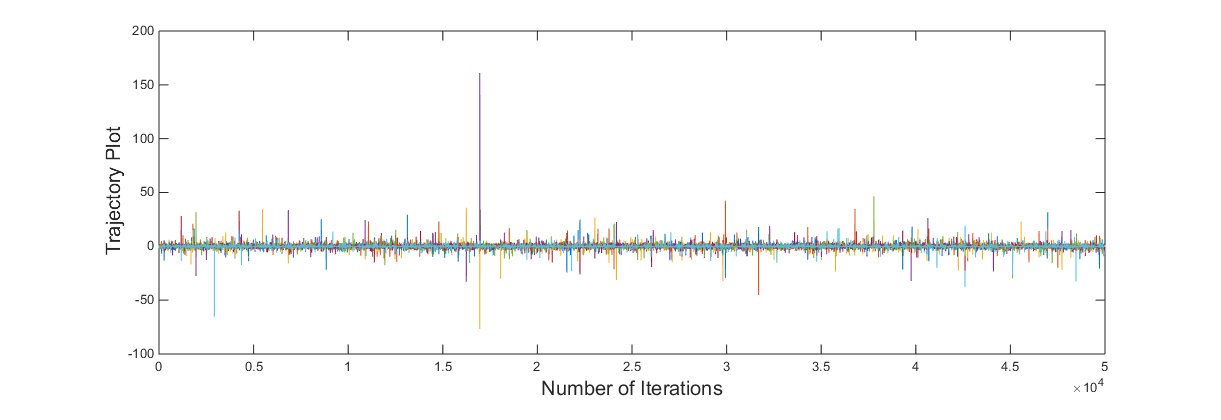}
      \includegraphics [scale=4]{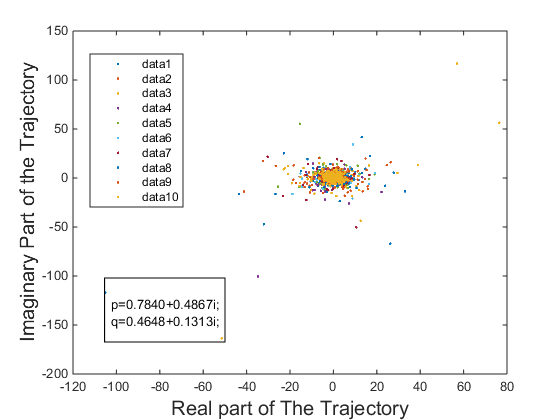}\\
      \includegraphics [scale=5]{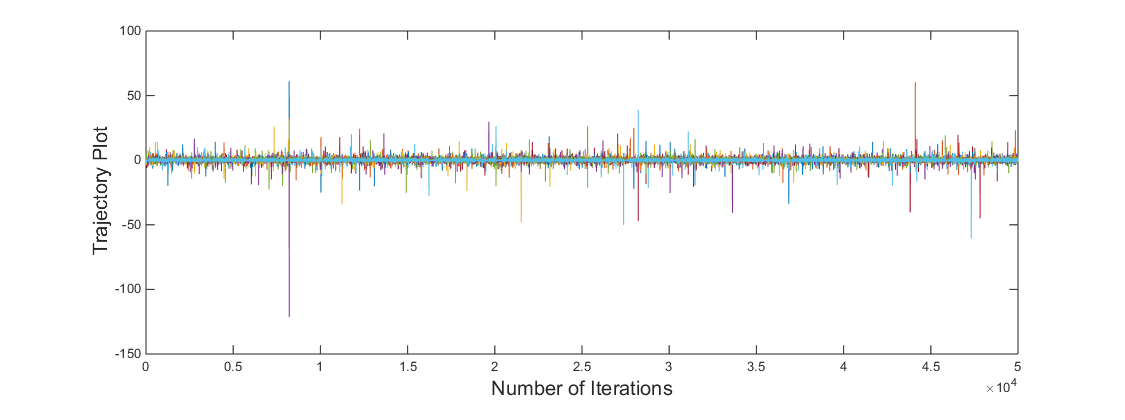}
      \includegraphics [scale=4]{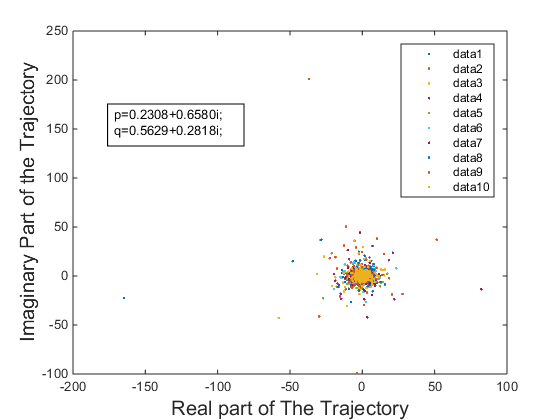}\\
      \end{tabular}
      }
\caption{Chaotic Trajectories of the equation (8) of four different cases as stated in Table 1.}
      \begin{center}

      \end{center}
      \end{figure}

In the Fig. 1, for each of the four cases ten different initial values are taken and plotted in the left and in the right corresponding complex plots are given. From the Fig. 1, it is evident that for the four different cases the basin of the chaotic attractor is neighbourhood of the centre $(0, 0)$ of complex plane.

\section{Some Interesting Nontrivial Problems}

\begin{openproblem}
Does the difference equation have higher order periodic cycle? If so, what is the highest periodic cycle?
\end{openproblem}

\begin{openproblem}
Find out the set of all parameters $p$ and $q$ for which the difference equation (8) has chaotic solutions.
\end{openproblem}

\begin{openproblem}
Find out the subset of the $\mathbb{D}$ of all possible initial values $z_0$ and $z_1$ for which the solutions of the difference equation are chaotic for any complex parameters $p$ and $q$. Does the neighbourhood of $(0, 0)$ is global chaotic attractor? If not, are there any other chaotic attractors?
\end{openproblem}

\section{Future Endeavours}

In continuation of the present work the study of the difference equation ${z_{n+1}=\frac{\alpha_n + \beta_n z_{n}+ \gamma_n z_{n-1}}{A_n + B_n z_n + C_n z_{n-1}}}$ where $\alpha_n$, $\beta_n$, $\gamma_n$, $A_n$, $B_n$ and $C_n$ are all convergent sequence of complex numbers and converges to $\alpha$, $\beta$, $\gamma$, $A$, $B$ and $C$ respectively is indeed would be very interesting and that we would like to pursue further. Also the most generalization of the present rational difference equation is $${z_{n+1}=\frac{\alpha + \beta z_{n-l}+ \gamma z_{n-k}}{A + B z_{n-l} + C z_{n-k}}}$$ where $l$ and $k$ are delay terms and it demands similar analysis which we plan to pursue in near future.

\section*{Acknowledgement}
The author thanks \emph{Dr. Pallab Basu} for discussions and suggestions.


\end{document}